\newtheorem{prop}{Proposition}
\newtheorem{thm}{Theorem}
\theoremstyle{definition}
\newtheorem{defn}{Definition}
\newtheorem{example}{Example}
\def\A{{\mathcal A}}
\def\Z{{\mathbb Z}}
\def\Q{{\mathbb Q}}
\def\R{{\mathbb R}}
\def\C{{\mathbb C}}
\def\F{\mathcal{F}}
\def\G{\mathcal{G}}
\def\B{\mathcal{B}}
\def\O{{\mathcal O}}
\def\E{{\mathcal E}}
\def\H{{\mathcal H}}
\def\cD{{\mathcal D}}
\def\F{{\mathcal F}}
\def\AS{{\mathfrak S}}
\def\DS{{\mathfrak D}}
\def\OG{\mathrm{OG}}
\def\SO{\mathrm{SO}}
\def\HH{\mathrm{H}}
\def\CH{\mathrm{CH}}
\def\Id{\mathrm{Id}}
\def\rX{\mathrm{X}}
\def\inv{\mathrm{inv}}
\def\Par{\Pi}
\def\l{{\lambda}}
\def\x{\mathrm{x}}
\def\L{{\Lambda}}
\def\X{{\mathfrak X}}
\def\a{{\alpha}}
\def\b{{\beta}}
\def\s{{\sigma}}
\def\om{{\varpi}}
\def\ome{{\omega}}
\def\Om{{\Omega}}
\DeclareMathOperator{\Spec}{Spec}
\DeclareMathOperator{\rk}{rk}
\newcommand\bull{{\scriptscriptstyle \bullet}}
\newcommand{\dis}{\displaystyle}
\newcommand{\ssm}{\smallsetminus}
\newcommand{\hra}{\hookrightarrow}
\newcommand{\ra}{\rightarrow}
\newcommand{\dbar}{\overline{\partial}}
\newcommand{\End}{\mbox{End}}
\newcommand{\Ker}{\mbox{Ker}}
\newcommand{\Tr}{\mbox{Tr}}
\newcommand{\ov}{\overline}
\newcommand{\noin}{\noindent}
\newcommand{\wt}{\widetilde}
\newcommand{\Pf}{\mbox{Pfaffian}}
\newcommand{\wh}{\widehat}
\begin{document}

\title[Schubert polynomials and Arakelov theory]
{Schubert polynomials and Arakelov theory of orthogonal
flag varieties}
\author{Harry Tamvakis}
\date{January 5, 2010\\ \indent 2000 {\em Mathematics 
Subject Classification.} 14M15; 14G40, 05E15.}
\thanks{The author was supported in part by National Science 
Foundation Grant DMS-0901341.}
\address{University of Maryland,
Department of Mathematics,
1301 Mathematics Building,
College Park, MD 20742, USA}
\email{harryt@math.umd.edu}

\begin{abstract}
We propose a theory of combinatorially explicit Schubert polynomials
which represent the Schubert classes in the Borel presentation of the
cohomology ring of the orthogonal flag variety $\X=\SO_N/B$.  We use
these polynomials to describe the arithmetic Schubert calculus on
$\X$. Moreover, we give a method to compute the natural arithmetic
Chern numbers on $\X$, and show that they are all rational numbers.
\end{abstract}

\maketitle 

\setcounter{section}{-1}

\section{Introduction}

\noindent
Let $V$ be a complex vector space equipped with a nondegenerate
skew-symmetric bilinear form. Let $\X$ denote the flag variety for the
symplectic group, which parametrizes flags of isotropic subspaces in
$V$. In \cite{T5}, we defined a family of {\em symplectic Schubert
polynomials} for $\X$, which represent the classes of the Schubert
varieties in the Borel presentation \cite{Bo} of the cohomology ring
of $\X$. These polynomials were applied to understand the structure of
the Gillet-Soul\'e arithmetic Chow ring of $\X$, thought of as a
smooth scheme over the ring of integers. Our aim in this companion
paper to \cite{T5} is to explain the analogous theory for the
orthogonal group, which arises when the chosen bilinear form on $V$ is
symmetric.

The symplectic Schubert polynomials of \cite{T5} are closely related
to the type C Schubert polynomials of Billey and Haiman \cite{BH}.  As
in \cite[Thm.\ 3]{BH}, our theory of orthogonal Schubert polynomials
for the root system of type $\mathrm{B}_n$ is, up to well known scalar
factors, the same as that for the root system
$\mathrm{C}_n$. Moreover, using these $\mathrm{B}_n$ Schubert
polynomials, one can describe the arithmetic Chow ring of the flag
variety of the odd orthogonal group in a similar fashion to the
symplectic group, following \cite[Thm.\ 3]{T5}.  Therefore in this
paper we will concentrate on the even orthogonal case, and construct
Schubert polynomials for the root system of type $\mathrm{D}_n$.
For the application to arithmetic intersection theory, we must
deal with an extra relation which comes from the vanishing of the top
Chern class of the maximal isotropic subbundle of the trivial vector
bundle over $\X$. Fortunately, this relation can be computed using our
work \cite{T4} on the Arakelov theory of even orthogonal
Grassmannians.

This paper is organized as follows. We begin in \S \ref{prelims} with
combinatorial preliminaries on $\wt{P}$-polynomials and the
Lascoux-Sch\"utzenberger and Billey-Haiman Schubert polynomials. We
introduce our theory of orthogonal Schubert polynomials in \S
\ref{schubdef} and list some of their basic properties in \S
\ref{sproperties}. Section \ref{hvb} computes the curvature of the
relevant homogeneous vector bundles over $\X(\C)$, equipped with their
natural hermitian metrics. The arithmetic intersection theory of $\X$
is studied in \S \ref{ait}.  Our method for computing arithmetic
intersections is explained in \S \ref{compaa}, and the arithmetic
Schubert calculus is described in \S \ref{asc}.

I wish to thank the anonymous referees, whose comments helped to 
improve the exposition in this article.

\section{Preliminary definitions}
\label{prelims}

\subsection{$\wt{P}$- and $P$-functions}
\label{definitions}
We let $\Par$ denote the set of all integer partitions.  The {\em
length} $\ell(\l)$ of a partition $\l=(\l_1,\ldots,\l_r)$ is the
number of (nonzero) parts $\l_i$, and the weight $|\l|$ is the sum
$\sum_i \l_i$. We let $\l_i=0$ for any $i>\ell(\l)$.  A partition is
{\em strict} if no nonzero part is repeated. Let $\G_n=\{\l\in \Pi \
|\ \l_1 \leq n\}$ and let $\F_n$ be the set of strict partitions in
$\G_n$. 

Let $\rX=(\x_1,\x_2,\ldots)$ be a sequence of commuting independent
variables. Define the elementary symmetric functions $e_k=e_k(\rX)$
by the generating series 
\[
\sum_{k=0}^{\infty}e_k(\rX)t^k = \prod_{i=1}^{\infty}(1+\x_it).
\]
We will often work with coefficients in the ring $A=\Z[\frac{1}{2}]$;
the polynomial ring $\Lambda' = A[e_1,e_2,\ldots]$ is the ring of
symmetric functions in the variables $\rX$ with these coefficients.
Next, we define the $\wt{P}$-functions of Pragacz and Ratajski
\cite{PR}.  Set $\wt{P}_0=1$ and $\wt{P}_k=e_k/2$ for $k> 0$.  For
$i,j$ nonnegative integers, let
\[
\wt{P}_{i,j}=
\wt{P}_i\wt{P}_j+
2\sum_{r=1}^{j-1}(-1)^r\wt{P}_{i+r}\wt{P}_{j-r}+(-1)^j\wt{P}_{i+j}.
\]
If $\l$ is a partition of length greater than two, define
\[
\dis
\wt{P}_{\l}=\Pf(\wt{P}_{\l_i,\l_j})_{1\leq i<j\leq 2m},
\]
where $m$ is the least positive integer with $2m\geq \ell(\l)$.

These $\wt{P}$-functions have the following properties:

\medskip

(a) The $\wt{P}_{\l}(\rX)$ for $\l\in\Pi$ form an $A$-basis of $\L'$.

\medskip

(b) $\wt{P}_{k,k}(\rX)= \frac{1}{4} e_k(\rX^2) = 
\frac{1}{4}e_k(\x_1^2,\x_2^2,\ldots)$ 
for all $k>0$.

\medskip

(c) If $\l=(\l_1,\ldots,\l_r)$ and
$\l^+=\l\cup(k,k)=(\l_1,\ldots,k,k,\ldots,\l_r)$ then
\[
\wt{P}_{\l^+}=\wt{P}_{k,k}\wt{P}_{\l}.
\]

\medskip

(d) The coefficients of $\wt{P}_{\l}(\rX)$ are nonnegative
rational numbers.

\medskip
\noin
Let $\L'_n=A[\x_1,\ldots,\x_n]^{S_n}$ be the ring of symmetric polynomials
in $\rX_n=(\x_1,\ldots,\x_n)$. Then we have two additional properties.

\medskip

(e) If $\l_1>n$, then $\wt{P}_{\l}(\rX_n)=0$. The 
$\wt{P}_{\l}(\rX_n)$ for $\l\in\G_n$ form an $A$-basis of $\L'_n$.

\medskip

(f) $\wt{P}_n(\rX_n)\wt{P}_\l(\rX_n) = \wt{P}_{(n,\l)}(\rX_n)$ for all
$\l\in\G_n$.

\medskip
Suppose that $Y= (y_1,y_2,\ldots)$ is a second sequence of variables
and define symmetric functions $q_k(Y)$ by the equation
\[
\sum_{k=0}^{\infty}q_k(Y)t^k = \prod_{i=1}^{\infty}\frac{1+y_it}{1-y_it}.
\]
Let $\Gamma' = A[q_1,q_2\ldots]$ and define an $A$-algebra
homomorphism $\eta:\L' \ra \Gamma'$ by setting $\eta(e_k(\rX)) =
q_k(Y)$ for each $k\geq 1$. For any strict partition $\l$, the Schur
$P$-function $P_{\l}(Y)$ may be defined as the image of
$\wt{P}_{\l}(\rX)$ under $\eta$.  The $P_\l$ for strict partitions
$\l$ have nonnegative {\em integer} coefficients and form a free
$A$-basis of $\Gamma'$.

\subsection{Divided differences and type A Schubert polynomials}
\label{Wsec}
The symmetric group $S_n$ is the Weyl group for the root system
$\text{A}_{n-1}$. We write the elements $\om$ of $S_n$ using the
single-line notation $(\om(1),\om(2),\ldots,\om(n))$. The group $S_n$
is generated by the simple transpositions $s_i$ for $1\leq i\leq n-1$,
where $s_i$ interchanges $i$ and $i+1$ and fixes all other elements of
$\{1,\ldots,n\}$.

The elements of the Weyl group $\wt{W}_n$ for the root system
$\text{D}_n$ may be represented by signed permutations; we will adopt 
the notation where a bar is written over an element with a negative sign.
The group $\wt{W}_n$ is an extension of $S_n$ by an
element $s_0$ which acts on the right by
\[
(u_1,u_2,\ldots,u_n)s_0=(\ov{u}_2,\ov{u}_1,u_3,\ldots,u_n).
\]
A {\em reduced word} of $w\in\wt{W}_n$ is a sequence $a_1\ldots a_r$
of elements in $\{0,1,\ldots,n-1\}$ such that $w=s_{a_1}\cdots
s_{a_r}$ and $r$ is minimal (so equal to the length $\ell(w)$ of
$w$). If we convert all the $0$'s which appear in the reduced word
$a_1\ldots a_r$ to $1$'s, we obtain a {\em flattened word} of $w$.
For example, $20312$ is a reduced word of $\ov{1}4\ov{3}2$, and
$21312$ is the corresponding flattened word. Note that $21312$ is 
also a word, but not reduced, for $1432$. The elements of maximal
length in $S_n$ and $\wt{W}_n$ are
\[
\om_0=(n,n-1,\ldots,1) \  \ \ \mathrm{and} \ \ \ 
w_0=\begin{cases} (\ov{1},\ov{2},\ldots,\ov{n}) & \text{if $n$ is even}, \\
(1,\ov{2},\ldots,\ov{n}) & \text{if $n$ is odd}
\end{cases} 
\]
respectively.

The group $\wt{W}_n$ acts on the ring $A[\rX_n]$ of polynomials in
$\rX_n$: the transposition $s_i$ interchanges $\x_i$ and $\x_{i+1}$
for $1\leq i\leq n-1$, while $s_0$ sends $(\x_1,\x_2)$ to
$(-\x_2,-\x_1)$ (all other variables remain fixed). Following
\cite{BGG} and \cite{D1, D2}, we have divided difference
operators $\partial_i: A[\rX_n]\ra A[\rX_n]$. For $1\leq i\leq n-1$
they are defined by
\[
\partial_i(f)=(f-s_if)/(\x_i-\x_{i+1})
\]
while 
\[
\partial_0(f)=(f-s_0 f)/(\x_1+\x_2),
\]
for any $f\in A[\rX_n]$. For each $w\in \wt{W}_n$, define an operator
$\partial_w$ by setting
\[
\partial_w=\partial_{a_1}\circ \cdots \circ \partial_{a_\ell}
\]
if $w=a_1 \cdots a_\ell$ is a reduced word for $w$.

For every permutation $\om\in S_n$, Lascoux and Sch\"utzenberger
\cite{LS} defined a {\em type A Schubert polynomial}
$\AS_{\om}(\rX_n)\in\Z[\rX_n]$ by
\[
\AS_{\om}(\rX_n)=\partial_{\om^{-1}\om_0}\left(\x_1^{n-1}\x_2^{n-2}\cdots
\x_{n-1} \right).
\]
This definition is stable under the natural inclusion of $S_n$ into
$S_{n+1}$, hence the polynomial $\AS_w$ makes sense for $w\in
S_{\infty}= \cup_{n=1}^\infty S_n$. The $\AS_w$ for $w \in
S_{\infty}$ form a $\Z$-basis of $\Z[\rX]=\Z[\x_1,\x_2,\ldots]$.
The coefficients of $\AS_w$ are nonnegative integers.

\subsection{Billey-Haiman Schubert polynomials of type D}
We regard $\wt{W}_n$ as a subgroup of $\wt{W}_{n+1}$ in the obvious
way and let $\wt{W}_\infty$ denote the union of all the $\wt{W}_n$.
Let $Z=(z_1,z_2,\ldots)$ be a third sequence of commuting variables.
Billey and Haiman \cite{BH} defined a family $\{\cD_w\}_{w\in
\wt{W}_\infty}$ of Schubert polynomials of type D, which form an
$A$-basis of the ring $\Gamma'[Z]$. The expansion coefficients for a
product $\cD_u\cD_v$ in the basis of type D Schubert polynomials agree
with the Schubert structure constants on even orthogonal flag
varieties for sufficiently large $n$. For every $w\in \wt{W}_n$ there
is a unique expression
\begin{equation}
\label{bheq}
\cD_w = \sum_{{\l \, \text{strict}}\atop{\om\in S_n}}f_{\l,\om}^w
P_\l(Y)\AS_\om(Z)
\end{equation}
where the coefficients $f_{\l,\om}^w$ are nonnegative integers. 
We proceed to give a combinatorial formula for these 
numbers.

A sequence $a=(a_1,\ldots,a_m)$ is called {\em unimodal} if for some
$r \leq m$, we have
\[
a_1 > a_2 > \cdots > a_{r-1} \geq a_r < a_{r+1} < \cdots < a_m,
\]
and if $a_{r-1}=a_r$ then $a_r=1$.

Let $w\in \wt{W}_n$ and $\l$ be a Young diagram with $r$ rows such that
$|\l| = \ell(w)$. A {\em Kra\'skiewicz-Lam tableau} for $w$ of shape $\l$
is a filling $T$ of the boxes of $\lambda$ with positive integers
in such a way that

\medskip
\noindent
a) If $t_i$ is the sequence of entries in the $i$-th row of $T$,
reading from left to right, then the row word $t_r\ldots t_1$ is
a flattened word for $w$.

\medskip
\noindent
b) For each $i$, $t_i$ is a unimodal subsequence of maximum length
in $t_r \ldots t_{i+1} t_i$.

\medskip
\noin 
Let $T$ be a Kra\'skiewicz-Lam tableau of shape $\l$ with row
word $a_1\ldots a_\ell$. We define $m(T) = \ell(\l)+1-k$, where
$k$ is the number of distinct values of $s_{a_1}\cdots s_{a_j}(1)$ for
$0\leq j\leq \ell$. It follows from \cite[Thm.\ 4.35]{La} that $m(T)\geq 0$.

\begin{example} 
Let $\l\in\F_{n-1}$, $\ell=\ell(\l)$, $k=n-1-\ell$, and $\mu$ be the
strict partition whose parts are the numbers from $1$ to $n$ which do
not lie in the set $\{1,\l_\ell+1,\ldots,\l_1+1\}$. The barred
permutation
\[
w_{\l}=(\ov{\l_1+1},\ldots,\ov{\l_{\ell}+1},\hat{1},\mu_k,\ldots,\mu_1)
\]
where $\hat{1}$ is equal to $1$ or $\ov{1}$ according to the parity of
$\ell$ is the {\em maximal Grassmannian element} of $\wt{W}_n$
corresponding to $\l$. There is a unique Kra\'skiewicz-Lam tableau
$T_\l$ for $w_{\l}$, which has shape $\l$, and whose $i$-th row
consists of the numbers $1$ through $\l_i$ in decreasing
order. Moreover, we have $m(T_\l)=0$. For instance, if $\l = (6,4,3)$
then we obtain
\[
T_\l \ = \ 
\begin{array}{l}
6 \ 5 \ 4 \ 3 \ 2 \ 1 \\
4 \ 3 \ 2 \ 1 \\
3 \ 2 \ 1.
\end{array}
\]
\end{example}

\begin{prop}[BH, La]
\label{BHL}
For every $w \in \wt{W}_\infty$, we have $f^w_{\l,\om}= \sum_T
2^{m(T)}$, summed over all Kra\'skiewicz-Lam tableaux $T$ for
$w\om^{-1}$ of shape $\l$, if $\ell(w\om^{-1}) = \ell(w) - \ell(\om)$,
and $f^w_{\l,\om}=0$ otherwise.
\end{prop}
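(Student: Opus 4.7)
The strategy is two-step: first collapse $\cD_w$ into a linear combination of type D Stanley symmetric functions times type A Schubert polynomials, and then invoke Lam's Kra\'skiewicz-style tableau formula for the $P$-expansion of these Stanley functions.

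For the first step, I would recall the Billey-Haiman \emph{splitting identity} \cite{BH}: setting $E_u(Y) := \cD_u(Y;0)$ for $u \in \wt{W}_\infty$, one has
$$
\cD_w(Y;Z) \ = \sum_{\substack{w=u\om,\, \om \in S_\infty\\ \ell(u)+\ell(\om)=\ell(w)}} E_u(Y)\,\AS_\om(Z).
$$
This follows from Billey-Haiman's reduced-word formula for $\cD_w$ by grouping reduced words according to the position of the last generator with index in $\{\Box, 1,\ldots,n-1\}$ belonging to the $S_\infty$-part; equivalently, one may apply the Leibniz rule to the divided-difference recursion for $\cD_w$, using that the operators $\partial_i$ with $i\neq\Box$ are the usual type A divided differences acting trivially on $Y$. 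For fixed $\om \in S_n$, the factorization $w = u\om$ with $\ell(u)+\ell(\om)=\ell(w)$ exists uniquely (with $u=w\om^{-1}$) precisely when $\ell(w\om^{-1})=\ell(w)-\ell(\om)$. Since the $\AS_\om(Z)$ form a $\Z$-basis of $\Z[Z]$, comparison with (\ref{bheq}) yields
$$
f^w_{\l,\om} \ = \ [P_\l(Y)]\,E_{w\om^{-1}}(Y)
$$
under the length condition, and $f^w_{\l,\om}=0$ otherwise.

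For the second step, I would invoke Lam's theorem \cite[Thm.\ 4.35]{La}, which asserts
$$
E_u(Y) \ = \sum_{\l \text{ strict}} \Bigl(\sum_T 2^{m(T)}\Bigr) P_\l(Y),
$$
the inner sum running over Kra\'skiewicz-Lam tableaux $T$ of shape $\l$ for $u$. Substituting $u = w\om^{-1}$ and reading off the coefficient of $P_\l(Y)$ gives the stated formula. The nonnegativity of $m(T)$, already cited from \cite[Thm.\ 4.35]{La} in the body of the text, guarantees that the expression $\sum_T 2^{m(T)}$ is a nonnegative integer, consistent with the integrality claim for $f^w_{\l,\om}$.

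The main obstacle, and the only substantive content, is Lam's tableau theorem itself; the splitting identity is a routine by-product of the inductive definition of $\cD_w$ via divided differences. Lam's result rests on a type D adaptation of the Edelman-Greene / Kra\'skiewicz insertion for unimodal factorizations of signed permutations, where the factor $2^{m(T)}$ compensates for the $2$'s appearing in $\wt{P}_k = e_k/2$ when passing from the mixed generating function for reduced words to the $P$-basis. Once that machinery is cited, the proposition reduces to bookkeeping.
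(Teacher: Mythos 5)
Your proposal is correct and follows essentially the same route as the paper: decompose $\cD_w$ via the Billey--Haiman splitting $\cD_w=\sum_{uv=w}E_u(Y)\AS_v(Z)$ over length-additive factorizations with $v\in S_\infty$, then apply Lam's tableau formula for the $P$-expansion of the type D Stanley symmetric functions $E_u$. The only cosmetic difference is that the paper also cites \cite[Prop.\ 3.7]{BH} alongside \cite[Thm.\ 4.35]{La} for the second step, while you supply a bit more justification for the splitting identity itself.
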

\begin{proof}
According to \cite[Thm.\ 3]{BH}, the polynomial $\cD_w$ satisfies
\[
\cD_w = \sum_{uv=w} E_u(Y) \AS_v(Z),
\]
summed over all factorizations $uv=w$ in $\wt{W}_\infty$ such that
$\ell(u)+\ell(v) = \ell(w)$ with $v\in S_\infty$. The left factors
$E_u(Y)$ are the type D Stanley symmetric functions of \cite{BH, La}.
We deduce from \cite[Prop.\ 3.7]{BH} and \cite[Thm.\ 4.35]{La} that
for any $u\in \wt{W}_\infty$,
\[
E_u(Y) = \sum_{\l} d^u_{\l} \, P_{\l}(Y)
\]
where $d^u_{\l}= \sum_T 2^{m(T)}$, summed over all Kra\'skiewicz-Lam
tableaux $T$ for $u$ of shape $\l$.  The result follows by combining
these two facts.
\end{proof}

\section{Orthogonal Schubert polynomials}
\label{ssp}

\subsection{}
\label{soflag}
Consider the vector space $\C^{2n}$ with its canonical basis
$\{e_i\}_{i=1}^{2n}$ of unit coordinate vectors. We define the
{\em skew diagonal symmetric form} $[\ \,,\ ]$ on $\C^{2n}$ by setting
$[e_i,e_j]=0$ for $i+j\neq 2n+1$ and $[e_i,e_{2n+1-i}]=1$
for $1\leq i \leq 2n$. The orthogonal group $\SO_{2n}(\C)$ is 
the group of linear automorphisms of $\C^{2n}$ preserving 
the symmetric form. The upper triangular matrices in $\SO_{2n}$
form a Borel subgroup $B$.

A subspace $\Sigma$ of $\C^{2n}$
is called isotropic if the restriction of the symmetric 
form to $\Sigma$ vanishes. Consider a partial flag of 
subspaces 
\[
0= E_0 \subset E_1 \subset \cdots \subset E_n \subset E_{2n} = \C^{2n}
\]
with $\dim E_i = i$ and $E_n$ isotropic. Each such flag can be
extended to a complete flag $E_{\bull}$ in $\C^{2n}$ by letting
$E_{n+i}=E_{n-i}^{\perp}$ for $1\leq i\leq n$; we will call such a
flag a {\em complete isotropic flag}. We say that two isotropic
subspaces $E$ and $F$ of dimension $n$ are {\em in the same family} if
$\dim(E\cap F) \equiv n \,(\text{mod } 2)$; two complete isotropic
flags $E_\bull$ and $F_\bull$ are in the same family if $E_n$ and
$F_n$ are. The variety $\X=\SO_{2n}/B$ parametrizes complete isotropic
flags $E_\bull$ with $E_n$ in the same family as $\langle e_1,\ldots
,e_n\rangle$.  We use the same notation to denote the tautological
flag $E_\bull$ of vector bundles over $\X$.

There is a group monomorphism $\phi:\wt{W}_n\hra S_{2n}$ whose image
consists of those permutations 
$\om\in S_{2n}$ such that $\om(i)+\om(2n+1-i) = 2n+1$ for all $i$
and the number of $i\leq n$ such that $\om(i)>n$ is even.
The map $\phi$ is determined by setting, 
for each $w=(w_1,\ldots,w_n)\in \wt{W}_n$ and $1\leq i \leq n$, 
\[
\phi(w)(i)=\left\{ \begin{array}{cl}
             n+1-w_{n+1-i} & \mathrm{ if } \ w_{n+1-i} \ \mathrm{is} \ 
             \mathrm{unbarred}, \\
             n+\ov{w}_{n+1-i} & \mathrm{otherwise}.
             \end{array} \right.
\]

Let $F_{\bull}$ be a fixed complete isotropic flag in the same family
as the flags in $\X$.  For every $w\in \wt{W}_n$ define the {\em Schubert
variety} $\X_w(F_\bull)\subset \X$ as the closure of the locus of
$E_\bull \in \X$ such that
\[
\dim(E_r\cap F_s) =  \#\,\{\,i \leq r \ |\ \phi(w_0ww_0)(i)> 2n-s\,\}
\ \ \mathrm{for} \ \ 1\leq r\leq n-1,\, 1\leq s\leq 2n.
\]
The Schubert class $\sigma_w$ in $\HH^{2\ell(w)}(\X,\Z)$ is the 
cohomology class which is Poincar\'e dual to the homology class
determined by $\X_w(F_\bull)$.

Following Borel \cite[\S 29]{Bo}, the cohomology ring $\HH^*(\X,A)$ is
presented as a quotient
\begin{equation}
\label{presentation}
\HH^*(\X,A) \cong A[\x_1,\ldots,\x_n]/J_n
\end{equation}
where $J_n$ is the ideal generated by the $\wt{W}_n$-invariants of
positive degree in $A[\rX_n]$. 
The inverse of the isomorphism (\ref{presentation}) sends the class of
$\x_i$ to $-c_1(E_{n+1-i}/E_{n-i})$ for each $i$ with $1\leq i \leq
n$.

\subsection{}
\label{schubdef}
For every $\l\in \G_n$ and $\om\in S_n$, define the polynomial 
$\DS_{\l,\om}=\DS_{\l,\om}(\rX_n)$ by
\[
\DS_{\l,\om} = \wt{P}_{\l}(\rX_n) \AS_\om(-\rX_n) = 
(-1)^{\ell(\om)}\wt{P}_{\l}(\rX_n) \AS_\om(\rX_n).
\]
Lascoux and Pragacz \cite{LP} showed that the products
$\wt{P}_{\l}(\rX_n) \AS_\om(\rX_n)$ for $\l\in\F_{n-1}$ and $\om\in
S_n$ form a basis for the polynomial ring $A[\rX_n]$ as an
$A[\rX_n]^{\wt{W}_n}$-module.  Observe that the $\DS_{\l,\om}(\rX_n)$
for $\l\in\G_n$ and $\om\in S_n$ form a basis of $A[\x_1,\ldots,\x_n]$
as an $A$-module.  The ideal $J_n$ of \S \ref{soflag} is generated by
the polynomials $e_i(\rX_n^2)= 4\, \wt{P}_{i,i}(\rX_n)$ and
$e_n(\rX_n)=2\,\wt{P}_n(\rX_n)$, and the $\wt{P}$-polynomials have the
factorization properties (c), (f) and the vanishing property (e) of
\S \ref{definitions}. We deduce that $\wt{P}_\l(\rX_n)\in J_n$
unless $\l\in \F_{n-1}$.

\begin{defn}
\label{cdefine}
For $w\in \wt{W}_n$, define the orthogonal Schubert polynomial 
$\DS_w=\DS_w(\rX_n)$ by
\[
\DS_w = \sum_{{\l\in \F_{n-1}}\atop{\om\in S_n}} 
f^w_{\l,\om} \DS_{\l,\om}(\rX_n)
\]
where the coefficients $f^w_{\l,\om}$ are the same 
as in (\ref{bheq}) and Proposition \ref{BHL}.
\end{defn}

\begin{thm}
\label{defthm}
The orthogonal Schubert polynomial $\DS_w(\rX_n)$ 
is the unique $\Z$-linear combination of the $\DS_{\l,\om}(\rX_n)$ for 
$\l\in\F_{n-1}$ and $\om\in S_n$ which represents the Schubert class
$\sigma_w$ in the Borel presentation {\em (\ref{presentation})}.
\end{thm}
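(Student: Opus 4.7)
The plan has two pieces: uniqueness from linear independence in cohomology, and existence by transporting Billey-Haiman's divided-difference recursion on $\cD_w$ down to $A[\rX_n]/J_n$.

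\emph{Uniqueness.} By the Lascoux-Pragacz theorem cited in \S\ref{schubdef}, the products $\wt{P}_\l(\rX_n)\AS_\om(\rX_n)$ for $\l\in\F_{n-1}$ and $\om\in S_n$ form a free basis of $A[\rX_n]$ over $A[\rX_n]^{\wt{W}_n}$, so their images, and equally those of the $\DS_{\l,\om}$ (which differ only by the signs $(-1)^{\ell(\om)}$), form an $A$-basis of $\HH^*(\X,A)=A[\rX_n]/J_n$. Their count $2^{n-1}n!=|\wt{W}_n|$ matches the Schubert count, so the expansion of $\sigma_w$ in this basis is unique; in particular, its $\Z$-linear representative, once known to exist, is unique.

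\emph{Existence.} I would proceed by descending induction on $\ell(w)$ using the Bernstein-Gelfand-Gelfand/Demazure divided-difference recursion: verify that for each $i\in\{1,\ldots,n-1,\Box\}$,
\[
\partial_i \DS_w \equiv \begin{cases} \DS_{ws_i} & \text{if } \ell(ws_i)=\ell(w)-1, \\ 0 & \text{otherwise,}\end{cases} \pmod{J_n}
\]
together with a base case $\DS_{w_0}\equiv \sigma_{w_0}$ in $\HH^*(\X,A)$. These two ingredients force $\DS_w\equiv \sigma_w$ for every $w\in\wt{W}_n$. The recursion is to be obtained by transferring Billey-Haiman's analogous recursion for $\cD_w\in\Gamma'[Z]$ through the $A$-linear substitution $P_\l(Y)\AS_\om(Z)\mapsto \DS_{\l,\om}(\rX_n)$. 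For $i\in\{1,\ldots,n-1\}$ the Leibniz rule collapses, because $\wt{P}_\l(\rX_n)$ is $s_i$-invariant, leaving the classical type A recursion on the $\AS_\om$-factor, with the sign contributed by $\AS_\om(-\rX_n)=(-1)^{\ell(\om)}\AS_\om(\rX_n)$ tracked against the analogous sign in Billey-Haiman's conventions. The base case can be verified by writing $\DS_{w_0}$ explicitly from its Billey-Haiman expansion and comparing in top degree to a standard representative of $\sigma_{w_0}$.

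The main obstacle is the recursion for $i=\Box$: the operator $s_\Box:(\x_1,\x_2)\mapsto(-\x_2,-\x_1)$ acts nontrivially on $\wt{P}_\l(\rX_n)$ as well as on $\AS_\om(-\rX_n)$, so the Leibniz rule no longer collapses to a single factor. Handling this step requires an explicit analysis of $s_\Box\cdot\wt{P}_\l(\rX_n)$ via the generating-series definition and properties (a)--(f) of \S\ref{definitions}, a twisted Leibniz computation, and the verification that the result matches, coefficient by coefficient, Billey-Haiman's $\partial_\Box$-recursion on the expansion $\cD_w=\sum f^w_{\l,\om}P_\l(Y)\AS_\om(Z)$.
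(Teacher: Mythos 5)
Your uniqueness argument is sound and is essentially the paper's: the Lascoux--Pragacz basis statement implies that the images of the $\DS_{\l,\om}$ for $\l\in\F_{n-1}$, $\om\in S_n$ form an $A$-basis of $A[\rX_n]/J_n$ (the paper proves the equivalent statement that the complementary $\DS_{\l,\om}$, $\l\in\G_n\ssm\F_{n-1}$, span $J_n$, using properties (b), (c), (f) of \S\ref{definitions}).

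The existence half, however, has a genuine gap, and it sits exactly where you put the flag. Your induction cannot run on the type A operators alone --- not every $w\in\wt{W}_n$ is reachable from $w_0$ by right multiplication by elements of $S_n$ --- so the $\partial_\Box$ step is not an optional refinement but the load-bearing part of the argument, and you do not carry it out. It is also not a routine Leibniz computation: the substitution $P_\l(Y)\AS_\om(Z)\mapsto \wt{P}_\l(\rX_n)\AS_\om(-\rX_n)$ is only an $A$-linear map of modules, not a ring map intertwining the two $\partial_\Box$ operators, and $\wt{P}_\l(\rX_n)$ is not $s_\Box$-invariant, so $\partial_\Box\DS_{\l,\om}$ produces cross-terms that must be re-expanded in the $\DS_{\mu,\om'}$ basis and matched against the coefficients $f^{ws_\Box}_{\mu,\om'}$ of Proposition \ref{BHL}. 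Establishing that match is essentially equivalent in difficulty to the theorem itself. The base case $\DS_{w_0}\equiv\sigma_{w_0}$ is likewise deferred rather than proved. (There is also a sign slip: by property (d) of \S\ref{sproperties}, the correct recursion is $\partial_i\DS_w=-\DS_{ws_i}$ for $1\le i\le n-1$, which must be reconciled with the sign in the BGG recursion for the classes $\sigma_w$ under the convention $\x_i\mapsto -c_1(E_{n+1-i}/E_{n-i})$.) The paper sidesteps all of this: it expands $\cD_w$ in the power sums $p_\mu(Y)$ with $\mu$ odd, uses J\'ozefiak's description of $\ker\eta$ to control the substitution $p_k(Y)\mapsto p_k(\rX)/2$ modulo the ideal generated by the $e_i(\rX^2)$, and invokes Billey--Haiman's theorem that the specialization $\cD_w(\rX_n)$ already represents $\sigma_w$ in (\ref{presentation}); the discrepancy then dies in $J_n$ because $\wt{P}_\l(\rX_n)\in J_n$ for $\l\notin\F_{n-1}$. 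To complete your route you would need to prove the $\partial_\Box$ compatibility and the normalization at $w_0$; as written, existence is asserted rather than established.
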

\begin{proof}
Recall that a partition is {\em
odd} if all its non-zero parts are odd integers. For each partition
$\mu$, let $p_\mu=\prod_ip_{\mu_i}$, where $p_r(\rX) =
\x_1^k+\x_2^k+\cdots$ denotes the $r$-th power sum. The $p_\mu(Y)$ for
$\mu$ odd form a $\Q$-basis of $\Gamma'\otimes_A\Q$. We therefore have
a unique expression
\begin{equation}
\label{ontheway}
\cD_w = \sum_{{\mu\, \text{odd}}\atop{\om\in S_n}}a_{\mu,\om}^w\,
p_{\mu}(Y)\AS_\om(Z)
\end{equation}
in the ring $\Gamma'[Z]\otimes_A\Q$. 

J\'ozefiak \cite{J} showed that the kernel of the homomorphism $\eta$
from \S \ref{definitions} is the ideal generated by the symmetric
functions of positive degree in $\rX^2=(\x_1^2,\x_2^2,\ldots)$. 
It follows from this and properties (b), (c) of \S \ref{definitions} that
$\eta(\wt{P}_{\l}) = 0$ unless $\l$ is a strict partition. Moreover,
we have $\eta(p_k(\rX)) = 2\,p_k(Y)$, if $k$ is odd, and
$\eta(p_k(\rX))=0$, if $k>0$ is even.

Let $p_{odd}=(p_1,p_3,p_5,\ldots)$.  Define a polynomial
$\cD_w(p_{odd}(\rX), \rX_{n-1})$ in the variables $p_k:=p_k(\rX)$ for
$k$ odd and $\x_1,\ldots,\x_{n-1}$ by substituting $p_k(Y)$ with
$p_k(\rX)/2$ and $z_i$ with $-\x_i$ in (\ref{ontheway}). 
We deduce from (\ref{bheq}), (\ref{ontheway}), and the above discussion 
that $\cD_w(p_{odd}(\rX),\rX_{n-1})$ differs from
\[
\sum_{{\l \, \text{strict}}\atop{\om\in S_n}} f_{\l,\om}^w
\wt{P}_\l(\rX)\AS_\om(-\rX_n)
\]
by an element in the ideal of $\L'[\rX_{n-1}]$ generated by the
$e_i(\rX^2)$ for $i>0$. 

According to \cite[\S 2]{BH}, for every $w\in \wt{W}_n$, the
polynomial 
\[
\cD_w(\rX_n):= \cD_w(p_{odd}(\rX_n),\rX_{n-1})
\] 
obtained by setting $\x_i = 0$ for all $i>n$ in
$\cD_w(p_{odd}(\rX),\rX_{n-1})$ represents the Schubert class
$\sigma_w$ in the Borel presentation (\ref{presentation}).  Since
$\wt{P}_\l(\rX_n)\in J_n$ unless $\l\in \F_{n-1}$, it follows that
$\DS_w$ represents the Schubert class $\s_w$ in the presentation
(\ref{presentation}), as required.

We claim that the $\DS_{\l,\om}$ for $\l\in \G_n\ssm\F_{n-1}$ and
$\om\in S_n$ form an $A$-basis of $J_n$. To see this, note that if $h$
is an element of $J_n$ then $h(\rX_n)=\sum_ie_i(\rX_n^2)f_i(\rX_n)
+e_n(\rX_n)g(\rX_n)$ for some polynomials $f_i,g\in A[\rX_n]$. Now the
$f_i$ and $g$ are unique $A$-linear combinations of the
$\DS_{\mu,\om}$ for $\mu\in\G_n$ and $\om\in S_n$, and properties (b),
(c), and (f) of \S \ref{definitions} give
\[
e_i(\rX_n^2)\DS_{\mu,\om}(\rX_n) =
4\,\DS_{\mu\cup(i,i),\om}(\rX_n)
\]
and 
\[ 
e_n(\rX_n)\DS_{\mu,\om}(\rX_n) = 2\,\DS_{(n,\mu),\om}(\rX_n),
\]
respectively. We deduce that any $h\in J_n$ lies in the $A$-linear
span of the $\DS_{\l,\om}$ for $\l\in \G_n\ssm\F_{n-1}$ and $\om\in S_n$.
Since the $\DS_{\l,\om}$ for $\l\in \G_n$ and $\om\in S_n$ are
linearly independent, this proves the claim and the uniqueness
assertion in the theorem.
\end{proof}

\medskip

The statement of Theorem \ref{defthm} may serve as an
alternative definition of the orthogonal Schubert polynomials
$\DS_w(\rX_n)$.

\subsection{}
\label{sproperties}
We give below some properties of the polynomials $\DS_w(\rX_n)$.

\medskip

(a) The set
\[
\{\DS_w \ |\ w\in \wt{W}_n\}
\cup \{\DS_{\l,\om}\ |\ \l\in \G_n\ssm\F_{n-1},\ \om\in S_n\}
\]
is an $A$-basis of the polynomial ring $A[\x_1,\dots,x_n]$. The
$\DS_{\l,\om}$ for $\l\in \G_n\ssm\F_{n-1}$ and $\om\in S_n$ span the
ideal $J_n$ of $A[\x_1,\ldots,\x_n]$ generated by the $e_i(\rX_n^2)$
for $1\leq i \leq n-1$ and $e_n(\rX_n)=\x_1\cdots\x_n$.

\medskip

(b) For every $u,v\in \wt{W}_n$, we have an equation
\begin{equation}
\label{structeq}
\DS_u\cdot \DS_v = \sum_{w\in \wt{W}_n}d_{uv}^w\,\DS_w + 
\sum_{{\l\in\G_n\ssm\F_{n-1}}\atop{\om\in S_n}}
d_{uv}^{\l\om}\,\DS_{\l,\om}
\end{equation}
in the ring $A[\x_1,\ldots,\x_n]$. The coefficients $d_{uv}^w$ are 
nonnegative integers, which vanish unless $\ell(w)=\ell(u)+\ell(v)$, 
and agree with the structure constants in the equation of 
Schubert classes 
\[
\s_u\cdot \s_v = \sum_{w\in \wt{W}_n}d_{uv}^w\, \s_w,
\]
which holds in $\HH^*(\X,\Z)$. The coefficients $d_{uv}^{\l\om}$ are
integers, some of which may be negative. Equation (\ref{structeq})
provides a lifting of the Schubert calculus from the cohomology ring
$\HH^*(\X,A)\cong A[\x_1,\ldots,\x_n]/J_n$ to the polynomial ring 
$A[\x_1,\ldots,\x_n]$.

\medskip

(c) For each $m<n$ let $i=i_{m,n}:\wt{W}_m \to \wt{W}_n$ 
be the natural embedding using the first $m$ components. Then for any
$w\in \wt{W}_m$ we have
\[
\left.
\DS_{i(w)}(\rX_n)\right|_{x_{m+1}=\cdots=x_n=0}\, =\, \DS_w(\rX_m).
\]

(d) For $\om\in S_n$ and $w\in \wt{W}_n$, we have
\[
\partial_\om\DS_w = \begin{cases}
(-1)^{\ell(\om)}\,\DS_{w\om} & \text{if} \ \ell(w\om) = \ell(w) - \ell(\om), \\
0 & \text{otherwise}.
\end{cases}
\]

\medskip
\medskip
\noin
The remaining properties listed in \cite[\S 2.3]{T5} also have
analogues here, and their proofs are similar.

\begin{example} a) We have the equations
\begin{gather*}
\DS_{s_0}(\rX_n) = \wt{P}_1(\rX_n) = \frac{1}{2}(x_1+x_2+\cdots + x_n) \\
\DS_{s_1}(\rX_n) = \wt{P}_1(\rX_n) - \AS_{s_1}
= \frac{1}{2}(-x_1+x_2+\cdots + x_n) \\
\DS_{s_i}(\rX_n) = 2\,\wt{P}_1(\rX_n) - \AS_{s_i}
= x_{i+1}+\cdots + x_n \ \  \text{for} \ \ 2 \leq i \leq n-1.
\end{gather*}
\medskip
\noin
b) For a maximal Grassmannian element $w_\l\in \wt{W}_n$, we have 
$\DS_w(\rX_n) = \wt{P}_\l(\rX_n)$. 
\end{example}

\begin{example}
The list of all orthogonal Schubert polynomials $\DS_w$ for $w\in
\wt{W}_3$ is given in Table \ref{schubtable}. These polynomials are
displayed according to the four orbits of the symmetric group $S_3$ on
$\wt{W}_3$. Once the highest degree term in each orbit is known, one
can compute the remaining elements easily using type A divided
differences, by property (d) above. The reader should compare this
table with \cite[Table 3]{BH}.
\end{example}

{\small{
\begin{table}[t]
\caption{Orthogonal Schubert polynomials for $w\in \wt{W}_3$}
\centering
\begin{tabular}{|l|c|} \hline
$w$ & $\DS_w(\rX_3)=\sum f_{\l,\om}^w
\,\wt{P}_{\l}(\rX_3)\,\AS_\om(-\rX_3)$ \\ \hline 
$123 = 1$ & $1$ \\
$213 = s_1$ & $\wt{P}_1 - \AS_{213}$ \\ 
$132 = s_2$ & $2\,\wt{P}_1 - \AS_{132}$ \\ 
$231 = s_1s_2$ & $\wt{P}_2 - \wt{P}_1\,\AS_{132} + \AS_{231}$ \\
$312=s_2s_1$ & $\wt{P}_2 - 2\,\wt{P}_1\,\AS_{213} + \AS_{312}$ \\ 
$321 = s_1s_2s_1$ & $\wt{P}_{21}
-\wt{P}_2\,\AS_{213}-\wt{P}_2\,\AS_{132}
+\wt{P}_1\,\AS_{312}+2\,\wt{P}_1\,\AS_{231} - \AS_{321}$ \\

$\ov{2}\ov{1}3 = s_0$ & $\wt{P}_1$ \\
$\ov{1}\ov{2}3 = s_0 s_1$ & $\wt{P}_2 - \wt{P}_1\,\AS_{213}$ \\ 
$\ov{2}3\ov{1} = s_0 s_2$ & $\wt{P}_2 - \wt{P}_1\,\AS_{132}$ \\
$\ov{1}3\ov{2}=s_0 s_1s_2$ & $-\wt{P}_2\,\AS_{132}
+\wt{P}_1\,\AS_{231}$ \\
$3\ov{2}\ov{1} = s_0 s_2s_1$ & $\wt{P}_{21}-\wt{P}_2\,\AS_{213}
+\wt{P}_1\,\AS_{312}$ \\
$3\ov{1}\ov{2}=s_0 s_1s_2s_1$ & $-\wt{P}_{21}\,\AS_{132}
+\wt{P}_2\,\AS_{312}+\wt{P}_2\,\AS_{231}-\wt{P}_1 \,\AS_{321}$ \\

$\ov{3}\ov{1}2 = s_2s_0$ & $\wt{P}_2$ \\
$\ov{1}\ov{3}2 = s_2s_0 s_1$ & $-\wt{P}_2\,\AS_{213}$ \\
$\ov{3}2\ov{1} = s_2s_0 s_2$ & $\wt{P}_{21} - \wt{P}_2\,\AS_{132}$ \\
$\ov{1}2\ov{3} = s_2s_0 s_1s_2$ & $\wt{P}_2\,\AS_{231}$ \\
$2\ov{3}\ov{1} = s_2s_0 s_2s_1$ & $-\wt{P}_{21}\,\AS_{213}
+\wt{P}_2\,\AS_{312}$ \\
$2\ov{1}\ov{3} = s_2s_0 s_1s_2s_1$ & $\wt{P}_{21}\,\AS_{231}
-\wt{P}_2\,\AS_{321}$ \\

$\ov{3}\ov{2}1 = s_1s_2s_0$ & $\wt{P}_{21}$ \\
$\ov{2}\ov{3}1 = s_1s_2s_0 s_1$ & $-\wt{P}_{21}\,\AS_{213}$ \\
$\ov{3}1\ov{2} = s_1s_2s_0 s_2$ & $-\wt{P}_{21}\,\AS_{132}$ \\
$\ov{2}1\ov{3} = s_1s_2s_0 s_1s_2$ & $\wt{P}_{21}\,\AS_{231}$ \\
$1\ov{3}\ov{2} = s_1s_2s_0 s_2s_1$ & $\wt{P}_{21}\,\AS_{312}$ \\
$1\ov{2}\ov{3} = s_1s_2s_0 s_1s_2s_1$ & $-\wt{P}_{21}\,\AS_{321}$ \\
\hline
\end{tabular} 
\label{schubtable}
\end{table}}}

\section{Curvature of homogeneous vector bundles}
\label{hvb}

For any complex manifold $X$, we denote the space of $\C$-valued
smooth differential forms of type $(p,q)$ on $X$ by $\A^{p,q}(X)$.  A
hermitian vector bundle on $X$ is a pair $\ov{E}=(E,h)$ consisting of
a holomorphic vector bundle $E$ over $X$ and a hermitian metric $h$ on
$E$. Let $K(\ov{E})\in \A^{1,1}(X,\End(E))$ be the curvature of
$\ov{E}$ with respect to the hermitian holomorphic connection on
$\ov{E}$ and set $K_E = \frac{i}{2\pi}K(\ov{E})$.  For any
integer $k$ with $1\leq k\leq \rk(E)$, we have a Chern form
$c_k(\ov{E}):=\Tr(\bigwedge^k K_E) \in \A^{k,k}(X)$. The total
Chern form of $\ov{E}$ is $c(\ov{E}) = 1+\sum_{k=1}^n
c_k(\ov{E})$. These differential forms are closed and their classes
in the de Rham cohomology of $X$ are the Chern classes of $E$.

To simplify the notation in this section, we will redefine the group
$\SO_{2n}(\C)$ using the {\em standard symmetric form} $[\ \,,\ ]'$
on $\C^{2n}$ whose matrix $[e_i,e_j ]_{i,j}'$ on unit coordinate
vectors is $\left(\begin{array}{cc} 0 & \Id_n \\ \Id_n & 0
\end{array}\right)$, where $\Id_n$ denotes the $n\times n$ identity
matrix.  Let $\X=\SO_{2n}/B$ be the orthogonal flag variety and
$E_\bull$ its tautological complete isotropic flag of vector bundles.
We equip the trivial vector bundle $E_{2n}=\C^{2n}_\X$ with the
trivial hermitian metric $h$ compatible with the symmetric form $[\,\
,\ ]'$ on $\C^{2n}$.

The metric $h$ on $E$ induces metrics on all the subbundles $E_i$ and the
quotient line bundles $Q_i=E_i/E_{i-1}$, for $1\leq i\leq n$.  Our
goal here is to compute the $\SO(2n)$-invariant curvature matrices of
the homogeneous vector bundles $\ov{E}_i$ and $\ov{Q}_i$ for $1\leq i
\leq n$.  As in \cite[\S 3.2]{T5}, we do this by pulling
back these matrices of $(1,1)$-forms from $\X$ to the compact Lie
group $\SO(2n)$, where their entries may be expressed in terms of the
basic invariant forms on $\SO(2n)$.

The Lie algebra of $\SO_{2n}(\C)$ is given by 
$${\mathfrak s}{\mathfrak o}(2n,\C)=\{(A,B,C)\ |\ A,B,C \in
M_n(\C), \ B,C \mbox{ skew symmetric} \},$$ 
where $(A,B,C)$ denotes the matrix 
$\left(
\begin{array}{cc}
  A & B \\
  C & -A^t
\end{array}
\right)$.  Complex conjugation of the algebra ${\mathfrak s}{\mathfrak
o}(2n,\C)$ with respect to the Lie algebra of $\SO(2n)$ is given by
the map $\tau$ with $\tau(A)=-\ov{A}^t$.  The Cartan subalgebra
$\mathfrak{h}$ consists of all matrices of the form
$\{(\mathrm{diag}(t_1,\ldots,t_n),0,0)\ |\ t_i\in \C\}$, where
$\mathrm{diag}(t_1,\ldots,t_n)$ denotes a diagonal matrix. Consider
the set of roots
\[
R = \{ \pm t_i \pm t_j\ |\ i \neq j \} \subset {\mathfrak h}^*
\]
and a system of positive roots 
\[
R^+=\{t_i-t_j\ |\ i<j\}\cup \{ t_p+t_q\ |\ p < q \},
\]
where the indices run from $1$ to $n$.  We use $ij$ to denote a
positive root in the first set and $pq$ for a positive root in the
second. The corresponding basis vectors are $e_{ij}=(E_{ij},0,0)$ and
$e^{pq}=(0,E_{pq}-E_{qp},0)$ for $p< q$, where $E_{ij}$ is the matrix
with $1$ as the $ij$-th entry and zeroes elsewhere.

Define $\ov{e}_{ij} = \tau(e_{ij})$, $\ov{e}^{pq} = \tau(e^{pq})$, and
consider the linearly independent set 
\[
\B'=\{e_{ij},\, \ov{e}_{ij},\, e^{pq},\, \ov{e}^{pq}\ |\ i<j ,\ p < q\}.
\]  
The adjoint representation of ${\mathfrak h}$ on ${\mathfrak
s}{\mathfrak o}(2n,\C)$ gives a root space decomposition
\[
{\mathfrak s}{\mathfrak o}(2n,\C) =
{\mathfrak h}\oplus \sum_{i<j}(\C\, e_{ij}\oplus \C \,\ov{e}_{ij})
\oplus \sum_{p < q}(\C \,e^{pq}\oplus \C \,\ov{e}^{pq}).
\]
Extend $\B'$ to a basis $\B$ of ${\mathfrak s}{\mathfrak o}(2n,\C)$ and
let $\B^*$ denote the dual basis of ${\mathfrak s}{\mathfrak
o}(2n,\C)^*$. Let $\ome^{ij}$, $\ov{\ome}^{ij}$, $\ome_{pq}$,
$\ov{\ome}_{pq}$ be the vectors in $\B^*$ which are dual to $e_{ij}$,
$\ov{e}_{ij}$, $e^{pq}$, $\ov{e}^{pq}$, respectively; we regard these
elements as left invariant complex one-forms on $\SO(2n)$. If $p>q$ we
agree that $\ome_{pq} = -\ome_{qp}$ and $\ov{\ome}_{pq} = -\ov{\ome}_{qp}$.
Finally, define $\ome_{ij}=\gamma\ome^{ij}$,
$\ov{\ome}_{ij}=\gamma\ov{\ome}^{ij}$, $\ome^{pq}=\gamma\ome_{pq}$,
and $\ov{\ome}^{pq}=\gamma\ov{\ome}_{pq}$, where $\gamma$ is a
constant such that $\gamma^2=\frac{i}{2\pi}$, and set $\Om_{ij} =
\ome_{ij}\wedge \ov{\ome}_{ij}$ and $\Om ^{pq} =
\ome^{pq}\wedge\ov{\ome}^{pq}$.

If $\pi:\SO(2n)\to \X$ denotes the quotient map, the pullbacks of the
aforementioned curvature matrices under $\pi$ can now be written
explicitly, following \cite[$(4.13)_X$]{GrS} and \cite[\S 3.2]{T5}. In
this way we arrive at the following proposition.

\begin{prop}
\label{grsprop} For every $k$ with $1\leq k \leq n$ we have
\[
c_1(\ov{Q}_k)=\sum_{i<k}\Omega_{ik}-
\sum_{j>k}\Omega_{kj}-
\sum_{p \neq k}\Omega^{pk}
\]
and $K_{E_k}=\{\Theta_{\a\b}\}_{1\leq \a,\b\leq k}$, where
\[
\dis
\Theta_{\a\b}=-\sum_{j>k}\ome_{\a j}\wedge\ov{\ome}_{\b j}
-\sum_{p\neq \a,\b}\ome^{p\a}\wedge\ov{\ome}^{p\b}.
\]
\end{prop}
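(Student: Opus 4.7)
The plan is to pull back the curvature forms to $\SO(2n)$ via $\pi$ and apply the general formula for the curvature of a homogeneous hermitian vector bundle from \cite[$(4.13)_X$]{GrS}, exactly as was done for the symplectic case in \cite[\S 3.2]{T4}. Over $\SO(2n)$, the pullback $\pi^*E_k$ carries a canonical orthonormal frame, namely the first $k$ columns $ge_1,\ldots,ge_k$ of $g\in \SO(2n)$, and the hermitian holomorphic connection on $\ov{E}_k$ is the restriction, via orthogonal projection, of the trivial flat connection on the ambient bundle $\C^{2n}_\X$.

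I would next identify the second fundamental form $A\in\A^{1,0}(\X,\Hom(E_k,E_k^{\perp}))$ with the $(1,0)$-part of the off-diagonal block of the Maurer-Cartan form $g^{-1}dg$. Under the root-space decomposition of $\mathfrak{so}(2n,\C)$, the nonzero entries of this block are precisely the components along the negative root vectors $\ov{e}_{\a j}$ for $\a\le k<j\le n$ (contributing to the $A$-block) and $\ov{e}^{p\a}$ for $\a\le k$ and $p\ne \a$ (contributing to the $C$-block). The standard identity $K(\ov{E}_k)=-A^*\wedge A$ for a holomorphic subbundle of a flat hermitian bundle then yields the stated formula for $\Theta_{\a\b}$, once one substitutes $\ome_{ij}=\g\,\ome^{ij}$, $\ome^{pq}=\g\,\ome_{pq}$ and uses $\g^2=i/(2\pi)$ to absorb the normalization factor from the definition $K_E=(i/2\pi)K(\ov{E})$.

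For the first Chern form, I would use the Whitney-sum relation arising from the short exact sequence $0\to E_{k-1}\to E_k\to Q_k\to 0$, giving $c_1(\ov{Q}_k)=\Tr K_{E_k}-\Tr K_{E_{k-1}}$. Taking the telescoping difference of $\sum_\a \Theta_{\a\a}$ between levels $k$ and $k-1$ yields two kinds of contributions: the $\a=k$ diagonal entry of $K_{E_k}$ produces $-\sum_{j>k}\Om_{kj}-\sum_{p\ne k}\Om^{pk}$, while for each $\a<k$ the terms with $j=k$ (present in the $E_{k-1}$ formula but dropped in the $E_k$ formula) produce $+\Om_{\a k}$, summing to $\sum_{i<k}\Om_{ik}$.

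The main technical issue is the bookkeeping of the two families of positive roots in type D ($t_i-t_j$ and $t_p+t_q$) and correctly reading off their contributions to the off-diagonal block of the Maurer-Cartan matrix; once this is done, the computation is a direct specialization of the framework in \cite[\S 3.2]{T4}.
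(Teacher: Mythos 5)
Your proposal is correct and follows essentially the same route as the paper, which itself only sketches the argument by pulling the curvature matrices back to $\SO(2n)$ and citing \cite[$(4.13)_X$]{GrS} and \cite[\S 3.2]{T4}; your identification of the second fundamental form with the off-diagonal block of the Maurer--Cartan form and the identity $K(\ov{E}_k)=-A^*\wedge A$ is precisely the content of those references, and your root-space bookkeeping matches the stated index sets. The telescoping Whitney-sum derivation of $c_1(\ov{Q}_k)$ from $\Tr K_{E_k}-\Tr K_{E_{k-1}}$ is also consistent with the displayed formula, so no gap remains.
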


\medskip

 Let $\dis\Omega=\bigwedge_{i<j} \Omega_{ij}\wedge\bigwedge_{p < q}
\Omega^{pq}$. It follows for instance from \cite[Cor.\ 5.16]{PR} that
the class of a point in $\X$ is Poincar\'e dual to $\dis
\frac{1}{2^{n-1}}\prod_{k=1}^{n-1} c_1(\ov{Q}_k^*)^{2n-2k}$.  We
conclude that $\dis\int_\X\Omega= \prod_{k=1}^{n-1}\frac{2}{(2k)!}$.

\section{Arithmetic intersection theory on $\SO_{2n}/B$}
\label{ait}

\subsection{Orthogonal flag varieties over $\Spec\Z$}
\label{classgps}

For the rest of this paper, $\X$ will denote the Chevalley scheme over
$\Z$ for the homogeneous space $\SO_{2n}/B$ described in \S
\ref{soflag}. Over any base field, the scheme $\X$ parametrizes
complete isotropic flags $E_\bull$ of a $2n$-dimensional vector space
$E$ equipped with the skew diagonal symmetric form, with $E_n$ in the
same family as $\langle e_1,\ldots,e_n \rangle$.  The arithmetic
orthogonal flag variety $\X$ is smooth over $\Spec \Z$, and has a
decomposition into Schubert cells induced by the Bruhat decomposition
of $\SO_{2n}$ (see e.g.\ \cite[\S 13.3]{Ja} for details).

There is a tautological complete isotropic flag of vector bundles
\[
E_\bull :\ 0=E_0\subset E_1\subset\cdots\subset E_{2n}=E
\]
over $\X$. For each $i$ with $1\leq i\leq 2n$ we let $\E_i$ denote the
short exact sequence
\[
\E_i\ :\ 0 \to E_{i-1}\to E_i \to Q_i\to 0.
\]
Let $\CH(\X)$ be the Chow ring of algebraic cycles on $\X$ modulo
rational equivalence, with coefficients in the ring $A$. Since $\X$
has a cellular decomposition, the class map induces an isomorphism
$\CH(\X)\cong \HH^*(\X(\C),A)$, following \cite[Ex.\ 19.1.11]{Fu} and
\cite[Lem.\ 6]{KM}.

We deduce that there is a ring isomorphism 
\[
\CH(\X)\cong A[\rX_n]/J_n.
\] 
This presentation of $\CH(\X)$ may be understood
geometrically as follows. The Whitney sum formula applied to the
filtration $E_\bull$ gives a Chern class equation
\[
\prod_{i=1}^{2n}(1+c_1(Q_i))=c(E)
\]
in $\CH(\X)$, which maps to the
identity $\prod_{i=1}^{2n}(1-\x_i^2) = 1$, since $E$ is a trivial
bundle. We thus obtain the relations $e_i(\rX_n^2)$ in $J_n$, for $1
\leq i \leq n-1$. Moreover, the relation $\x_1\cdots\x_n$ holds
because the top Chern class $c_n(E_n)$ vanishes.

We have an isomorphism of abelian groups 
\[
\CH(\X) \cong \bigoplus_{w\in \wt{W}_n}A\,\DS_w(\rX_n)
\]
where the polynomial $\DS_w(\rX_n)$ represents the class of the
codimension $\ell(w)$ Schubert scheme $\X_w$ in $\X$. The latter
is defined as the closure of the corresponding Schubert cell, the
complex points of which are given in \S \ref{soflag}.

\subsection{The arithmetic Chow group}
\label{acg}

For $p\geq 0$ we let $\wh{\CH}^p(\X)'$ denote the $p$-th arithmetic
Chow group of $\X$, as defined by Gillet and Soul\'e \cite{GS1}. As in
the case of $\CH(\X)$, we require coefficients in the ring $A$, so
we will work throughout with the groups $\wh{\CH}^p(\X):=
\wh{\CH}^p(\X)'\otimes_\Z A$. The elements in $\wh{\CH}^p(\X)$ are
represented by arithmetic cycles $(Z,g_Z)$, where $Z$ is a codimension
$p$ cycle on $\X$ and $g_Z$ is a current of type $(p-1,p-1)$ such that
the current $dd^cg_Z+\delta_{Z(\C)}$ is represented by a smooth
differential form on $\X(\C)$. Define
$\wh{\CH}(\X)=\bigoplus_p\wh{\CH}^p(\X)$.

Let $\A(\X(\C))=\bigoplus_p \A^{p,p}(\X(\C))$ and 
$\A'(\X(\C))\subset \A(\X(\C))$ be the
set of forms $\varphi$ in $\A(\X(\C))$ which can be written as
$\varphi=\partial \eta + \dbar \eta'$ for some smooth forms 
$\eta$, $\eta'$. Define $\wt{\A}(\X(\C))=\A(\X(\C))/\A'(\X(\C))$.
We let $F_{\infty}$ be the involution of $\X(\C)$ induced by complex 
conjugation. Let $\A^{p,p}(\X_{\R})$ be the subspace of $\A^{p,p}(\X(\C))$ 
generated by real forms $\eta$ such that $F^*_{\infty}\eta=(-1)^p\eta$;
denote by $\wt{\A}^{p,p}(\X_{\R})$
the image of $\A^{p,p}(\X_{\R})$ in $\wt{\A}^{p,p}(\X(\C))$.
Finally, let $\A(\X_{\R})=\bigoplus_p \A^{p,p}(\X_{\R})$ and
$\wt{\A}(\X_{\R})=\bigoplus_p \wt{\A}^{p,p}(\X_{\R})$.

Since the homogeneous space $\X$ admits a cellular decomposition, it
follows as in \cite{KM} that there is an exact sequence
\begin{equation}
\label{seseq}
0 \longrightarrow  \wt{\A}(\X_{\R})
\stackrel{a}\longrightarrow \wh{\CH}(\X)
\stackrel{\zeta}\longrightarrow \CH(\X)\longrightarrow 0
\end{equation}
where the maps $a$ and $\zeta$ are defined by
\[
\dis
a(\eta)=(0,\eta) \qquad \textrm{ and } \qquad
\zeta(Z,g_Z)=Z.
\]

We equip $E(\C)$ with the trivial hermitian metric compatible with the
skew diagonal symmetric form $[\,\ ,\ ]$ on $\C^{2n}$.  This metric
induces metrics on (the complex points of) all the vector bundles
$E_i$ and the line bundles $L_i=E_{n+1-i}/E_{n-i}$, for $1\leq i\leq
n$. We thus obtain hermitian vector bundles $\ov{E}_i$ and line
bundles $\ov{L}_i$ and, following \cite{GS2}, their arithmetic Chern
classes $\wh{c}_k(\ov{E}_i)\in \wh{\CH}^k(\X)$ and
$\wh{c}_1(\ov{L}_i)\in \wh{\CH}^1(\X)$. Set $\wh{x}_i =
-\wh{c}_1(\ov{L}_i)$ and for any $w\in \wt{W}_n$, define
\[
\wh{\DS}_w := \DS_w(\wh{x}_1,\ldots,\wh{x}_n)\in \wh{\CH}^{\ell(w)}(\X).
\]
The unique map of abelian groups
\begin{equation}
\label{splittingmap}
\epsilon\,:\,\CH(\X)\ra \wh{\CH}(\X)
\end{equation}
sending the Schubert class $\DS_w(\rX_n)$ to $\wh{\DS}_w$ for all
$w\in \wt{W}_n$ splits the exact sequence (\ref{seseq}). We thus
obtain an isomorphism of abelian groups
\begin{equation}
\label{bigiso}
\wh{\CH}(\X)\cong \CH(\X)\oplus \wt{\A}(\X_{\R}).
\end{equation}

\subsection{Computing arithmetic intersections}
\label{compaa}
We now describe an effective procedure for computing arithmetic Chern
numbers on the orthogonal flag variety $\X$, parallel to \cite[\S
4.3]{T5}.  Let $c_k(\ov{E}_i)$ and $c_1(\ov{L}_i)$ denote the Chern
forms of $\ov{E_i(\C)}$ and $\ov{L_i(\C)}$, respectively. In the
sequel we will identify these with their images in $\wh{\CH}(\X)$
under the inclusion $a$.  Let $x_i = -c_1(\ov{L}_i)$ for $1\leq i \leq
n$.

We begin with the short exact sequence
\[
\ov{\E}_{\OG}\ :\ 0 \to \ov{E}_n \to \ov{E} \to \ov{E}_n^* \to 0
\]
where $E_n$ denotes the tautological maximal isotropic subbundle of
$E$ over $\X$. Let $\wt{c}(\ov{\E}_{\OG})\in \wt{\A}(\X_\R)$ be
the {\em Bott-Chern form} \cite{BC, GS2} associated to $\ov{\E}_{\OG}$
for the total Chern class. This form may be computed using
\cite[Prop.\ 3]{T1}, which gives
\begin{equation}
\label{ses}
\wt{c}(\ov{\E}_{\OG}) = \sum_{k=1}^{n-1}(-1)^k\H_kp_k(\ov{E}_n^*).
\end{equation}
Here $p_r(\ov{E}_n^*) = (-1)^r\Tr((K_{E_n})^r)$ denotes the $r$-th
power sum form of $\ov{E}_n^*$, while $\H_r = 1+\frac{1}{2}+\cdots +
\frac{1}{r}$ is a harmonic number. Furthermore, by \cite[Thm.\
4.8(ii)]{GS2}, we have an equation
\begin{equation}
\label{key0}
\wh{c}(\ov{E}_n)\, \wh{c}(\ov{E}^*_n) = 1 + \wt{c}(\ov{\E}_{\OG})
\end{equation}
in $\wh{\CH}(\X)$. 

Consider the hermitian filtration
\[
\ov{\E} \ : \ 0 = \ov{E}_0 \subset \ov{E}_1 \subset \cdots
\subset \ov{E}_n.
\]
Let $\wt{c}(\ov{\E})\in \wt{\A}(\X_\R)$ be the Bott-Chern form of the
hermitian filtration $\ov{\E}$ corresponding to the total Chern class,
as defined in \cite{T2}. According to \cite[Thm.\ 2]{T2}, we have
\begin{equation}
\label{key1}
\prod_{i=1}^n(1-\wh{x}_i) = \wh{c}(\ov{E}_n) + \wt{c}(\ov{\E}).
\end{equation}
If $\wt{c}(\ov{\E}) = \sum_i\alpha_i$ with $\alpha_i\in
\wt{\A}^{i,i}(\X_\R)$ for each $i$, then define $\wt{c}(\ov{\E}^*) =
\sum_i(-1)^{i+1}\alpha_i$. This gives the dual equation
\begin{equation}
\label{key2}
\prod_{i=1}^n(1+\wh{x}_i) = \wh{c}(\ov{E}^*_n) + \wt{c}(\ov{\E}^*).
\end{equation}

The abelian group  $\wt{\A}(\X_{\R})=\Ker\zeta$ is an ideal of $\wh{\CH}(\X)$
such that for any hermitian vector bundle $\ov{F}$ over $\X$ and 
$\eta,\eta'\in \wt{\A}(\X_{\R})$, we have 
\begin{equation}
\label{modstr}
\wh{c}_k(\ov{F}) \cdot \eta = c_k(\ov{F}) \wedge \eta 
\qquad \text{and} \qquad
\eta\cdot \eta' = (dd^c\eta)\wedge\eta'.
\end{equation}
We now multiply (\ref{key1}) with (\ref{key2}) and combine the result with
(\ref{key0}) to obtain
\begin{equation}
\label{key}
\prod_{i=1}^n(1-\wh{x}^2_i) = 1 + \wt{c}(\ov{\E}, \ov{\E}^*), 
\end{equation}
where 
\begin{equation}
\label{keypart}
\wt{c}(\ov{\E}, \ov{\E}^*) = \wt{c}(\ov{\E}_{\OG}) + \wt{c}(\ov{\E})\wedge
c(\ov{E}^*_n) + \wt{c}(\ov{\E}^*)\wedge c(\ov{E}_n) +
(dd^c\wt{c}(\ov{\E}))\wedge \wt{c}(\ov{\E}^*).
\end{equation}
By pulling back \cite[Eqn.\ (6)]{T4} to $\X$, we get the equation
\begin{equation}
\label{keyog}
\wh{c}(\ov{E}^*_n)=\frac{1}{2}\H_{n-1}c_{n-1}(\ov{E}^*_n).
\end{equation}
Equating the top degree terms in (\ref{key2}) and (\ref{keyog}) gives
\begin{equation}
\label{key3}
\wh{x}_1\cdots\wh{x}_n  = \frac{1}{2}\H_{n-1}c_{n-1}(\ov{E}^*_n) + 
\wt{c}_n(\ov{\E}^*).
\end{equation}

In \cite{T2}, it is shown that $\wt{c}(\ov{\E})$ is a polynomial in
the entries of the matrices $K_{E_i}$ and $K_{L_i}$, $1\leq i \leq n$,
with {\em rational} coefficients. Using this, (\ref{ses}), and
(\ref{keypart}), we can express the differential form $\wt{c}(\ov{\E},
\ov{\E}^*)$ as a polynomial in the entries of the matrices $K_{E_i}$
and $K_{L_i}$ with rational coefficients.  On the other hand,
Proposition \ref{grsprop} gives explicit formulas for all these
curvature matrices in terms of $\SO(2n)$-invariant differential forms
on $\X(\C)$. Since we are using the skew diagonal symmetric form to
define the Lie groups here, the formulas in \S \ref{hvb} have to be
changed accordingly. The matrix realization of the Lie algebra
${\mathfrak s}{\mathfrak o}(2n,\C)$ in this case is given in \cite[\S
1.2, \S 2.3]{GW}, while the basis elements of $\mathfrak{h}$ should be
ordered as in \cite[(2.20)]{BH}. The indices $(i,j)$ and $(p,q)$ in
Proposition \ref{grsprop} are then replaced by $(n+1-j,n+1-i)$ and
$(n+1-q,n+1-p)$, respectively. Recalling that $L_i=E_{n+1-i}/E_{n-i}$,
we obtain the identities
\begin{align*}
x_1 &= -\Om_{12} - \Om_{13}- \cdots - \Om_{1n} + \Om^{12} + \Om^{13} +
\cdots + \Om^{1n} \\ 
x_2 &= \hspace{0.27cm} \Om_{12} - \Om_{23} - \cdots - \Om_{2n} +
\Om^{12} + \Om^{23} + \cdots + \Om^{2n} \\ 
&\quad \qquad \qquad \ \ \vdots
\quad \qquad \qquad \vdots \quad \qquad \qquad \vdots \\ 
x_n &= \hspace{0.27cm} \Om_{1n} +\Om_{2n} + \cdots + \Om_{n-1,n} 
+ \Om^{1n} + \Om^{2n} + \cdots + \Om^{n-1,n}
\end{align*}
in $\A^{1,1}(\X_\R)$. We also deduce the next result.
\begin{prop}
\label{compprop}
We have $\wt{c}_1(\ov{\E})=\wt{c}_1(\ov{\E}, \ov{\E}^*)=0$, \
$\dis\wt{c}_2(\ov{\E})=-\sum_{i<j}\Om_{ij}$, \ and 
\[
\wt{c}_2(\ov{\E}, \ov{\E}^*) =   
-2\,\sum_{i<j}\Om_{ij}-2\,\sum_{p<q} \Om^{pq}.
\]
\end{prop}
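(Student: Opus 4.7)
The plan is to handle the degree-one and degree-two pieces of the statement separately, using the explicit formulas (\ref{ses}) and (\ref{keypart}) together with Proposition \ref{grsprop}. For the degree-one claims the key general principle is that the first Bott-Chern form of any hermitian short exact sequence vanishes, because $c_1=\Tr(K)$ is exactly additive on SES at the differential-form level (equivalently, the determinant line bundle is multiplicative with compatible metrics). Iterating along the filtration yields $\wt{c}_1(\ov{\E})=0$, and the same argument gives $\wt{c}_1(\ov{\E}^*)=0$; from (\ref{ses}), $\wt{c}(\ov{\E}_{\OG})$ is a sum of $(k,k)$-forms with $k\geq 1$, so its $(0,0)$-component vanishes as well. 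Taking the bidegree $(0,0)$ piece of (\ref{keypart}) then gives $\wt{c}_1(\ov{\E},\ov{\E}^*)=0$.

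For $\wt{c}_2(\ov{\E}_{\OG})$, I extract the $(1,1)$-form piece (i.e.\ the $k=1$ summand) of (\ref{ses}):
\[
\wt{c}_2(\ov{\E}_{\OG}) = -\H_1\,p_1(\ov{E}_n^*) = -p_1(\ov{E}_n^*) = \Tr(K_{E_n}) = c_1(\ov{E}_n).
\]
Since $c_1$ is form-level additive, $c_1(\ov{E}_n)=\sum_{k=1}^n c_1(\ov{Q}_k)$; plugging in the adapted Proposition \ref{grsprop} (reindexed for the skew-diagonal form), the $\Om_{ij}$ terms cancel in pairs after swapping summation variables, while the $\Om^{pq}$ terms, being symmetric in $p,q$, contribute $-2\sum_{p<q}\Om^{pq}$, giving $\wt{c}_2(\ov{\E}_{\OG})=-2\sum_{p<q}\Om^{pq}$. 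For $\wt{c}_2(\ov{\E})$, I invoke the explicit construction of the Bott-Chern form of a hermitian filtration from \cite[Thm.\ 2]{T2}: its $(1,1)$-form component is a universal expression in the entries of the curvature matrices $K_{E_i}$ and $K_{L_i}$; after substituting the adapted curvature formulas and tracking cancellations, the $\Om^{pq}$ contributions vanish, leaving $\wt{c}_2(\ov{\E})=-\sum_{i<j}\Om_{ij}$. The definition of the dual Bott-Chern form then gives $\wt{c}_2(\ov{\E}^*)=(-1)^2\,\wt{c}_2(\ov{\E})=-\sum_{i<j}\Om_{ij}$.

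Finally, I take the $(1,1)$-form component of (\ref{keypart}). Since $\wt{c}_1=0$ for all three filtrations involved, each cross term $\wt{c}(\ov{\E})\wedge c(\ov{E}_n^*)$ contributes in bidegree $(1,1)$ only through $\wt{c}_2\cdot 1$, and the last term $(dd^c\wt{c}(\ov{\E}))\wedge\wt{c}(\ov{\E}^*)$ is supported in bidegree $\geq(3,3)$; summing the three surviving pieces yields the claimed formula for $\wt{c}_2(\ov{\E},\ov{\E}^*)$. The main obstacle is the explicit evaluation of $\wt{c}_2(\ov{\E})$: one must apply the formalism of \cite[Thm.\ 2]{T2} at form bidegree $(1,1)$ and verify that the orthogonal contributions $\Om^{pq}$ cancel completely, so that all $\Om^{pq}$ dependence of $\wt{c}_2(\ov{\E},\ov{\E}^*)$ comes entirely from $\wt{c}_2(\ov{\E}_{\OG})$.
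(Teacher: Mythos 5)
Your argument is correct and follows essentially the same route as the paper, whose proof simply defers to \cite[Prop.\ 4]{T4}: one extracts the $(0,0)$- and $(1,1)$-components of (\ref{ses}) and (\ref{keypart}), uses the form-level additivity of $c_1$ and Proposition \ref{grsprop} to get $\wt{c}_2(\ov{\E}_{\OG})=c_1(\ov{E}_n)=-2\sum_{p<q}\Om^{pq}$, and assembles the pieces exactly as you do. The one step you assert rather than carry out --- that the $(1,1)$-component of the filtration Bott--Chern form is $-\sum_{i<j}\Om_{ij}$ with no $\Om^{pq}$ contribution --- is the computational heart, and it does hold because the second fundamental form of each step $E_{k-1}\subset E_k$ involves only the forms $\ome_{\a k}$ (the terms $\ome^{p\a}\wedge\ov{\ome}^{p\b}$ in Proposition \ref{grsprop} are common to $K_{E_{k-1}}$ and $K_{E_k}$ and cancel), so the orthogonal root forms never enter $\wt{c}(\ov{\E})$ in degree $2$.
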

\begin{proof}
The argument is the same as the proof of \cite[Prop.\ 4]{T5}. 
\end{proof}

\medskip
Let $h(\rX_n)$ be a homogeneous polynomial in the ideal $J_n$ of \S
\ref{soflag}. We give an effective algorithm to compute the
arithmetic intersection $h(\wh{x}_1,\ldots,\wh{x}_n)$ as a class in
$\wt{\A}(\X_\R)$. First, we decompose $h$ as a sum $h(\rX_n)=\sum_i
e_i(\rX_n^2)f_i(\rX_n)+e_n(\rX_n)g(\rX_n)$ for some polynomials $f_i$ and $g$. 
Equation (\ref{key}) implies that
\begin{equation}
\label{simple}
e_i(\wh{x}_1^2,\ldots,\wh{x}_n^2) = (-1)^i \,\wt{c}_{2i}(\ov{\E}, \ov{\E}^*)
\end{equation}
for $1\leq i\leq n$. Using this, (\ref{key3}), and (\ref{modstr}), 
we see that 
\begin{align*}
h(\wh{x}_1,\wh{x}_2,\ldots\wh{x}_n) &= \sum_{i=1}^n
(-1)^i\,\wt{c}_{2i}(\ov{\E},\ov{\E}^*)\wedge f_i(x_1,\ldots,x_n) \\
& + \left(\frac{1}{2}\H_{n-1}c_{n-1}(\ov{E}^*_n) + 
\wt{c}_n(\ov{\E}^*)\right)\wedge g(x_1,\ldots,x_n)
\end{align*}
in $\wh{\CH}(\X)$. By the previous analysis, we can write the right
hand side of the above equation as a polynomial in the $x_i$ and the
entries of the matrices $K_{E_i}$ for $1\leq i \leq n$, with rational
coefficients, which is (the class of) an explicit $\SO(2n)$-invariant
differential form in $\wt{\A}(\X_\R)$.

Let $\wh{\deg}:\wh{\CH}^{n^2-n+1}(\X)\to\R$ denote the
arithmetic degree map of \cite{GS1}.

\begin{thm}
\label{mainthm} For any nonnegative integers  
$k_1,\ldots,k_n$ with $\sum k_i=n^2-n+1$, the arithmetic Chern number
$\dis \wh{\deg}(\wh{x}_1^{k_1}\wh{x}_2^{k_2}\cdots\wh{x}_n^{k_n}) $ is
a rational number.
\end{thm}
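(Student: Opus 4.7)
The plan is to apply the effective algorithm described in \S \ref{compaa} to the monomial $h(\rX_n)=\x_1^{k_1}\cdots \x_n^{k_n}$ and then integrate. The crucial dimensional input is that $\dim_\C \X = n^2-n$ (the number of positive roots in $\mathrm{D}_n$), so $\CH(\X) = A[\rX_n]/J_n$ vanishes in degrees exceeding $n^2-n$. In particular, since $\deg h = n^2-n+1$, the polynomial $h$ necessarily lies in the ideal $J_n$, and so we may write
\[
h(\rX_n) \;=\; \sum_{i=1}^{n-1} e_i(\rX_n^2)\, f_i(\rX_n) \,+\, e_n(\rX_n)\, g(\rX_n)
\]
with $f_i, g \in A[\rX_n]$.

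Next I would invoke the identity displayed just before the theorem statement: combining (\ref{simple}), (\ref{key3}), and the module structure (\ref{modstr}) yields
\[
h(\wh x_1,\ldots,\wh x_n) \;=\; \sum_{i=1}^{n-1} (-1)^i\, \wt c_{2i}(\ov\E,\ov\E^*)\wedge f_i(x) \,+\, \Bigl(\tfrac12 \H_{n-1}\, c_{n-1}(\ov E_n^*) + \wt c_n(\ov \E^*)\Bigr)\wedge g(x),
\]
exhibiting $h(\wh x_1,\ldots,\wh x_n)$ as $a(\eta)$ for an explicit form $\eta\in \wt\A^{n^2-n,n^2-n}(\X_\R)$. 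By \cite{T2} the Bott-Chern form $\wt c(\ov\E)$ is a polynomial with \emph{rational} coefficients in the entries of the curvature matrices $K_{E_i}$ and $K_{L_i}$; the same is true of $\wt c(\ov\E_{\OG})$ by (\ref{ses}), since the harmonic numbers $\H_r$ are rational. Hence $\eta$ itself is such a rational polynomial in the entries of the curvature matrices.

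Now I would apply the description of the arithmetic degree, namely $\wh\deg\circ a(\eta) = \tfrac12\int_{\X(\C)} \eta$, and reduce the calculation to integrating $\SO(2n)$-invariant top forms over $\X(\C)$. Using Proposition \ref{grsprop} (with the reindexing $(i,j)\mapsto (n+1-j,n+1-i)$ and $(p,q)\mapsto (n+1-q,n+1-p)$ explained in \S \ref{compaa}), every entry of every $K_{E_k}$ is a rational combination of the basic $(1,1)$-forms $\ome_{ij}\wedge\ov\ome_{k\ell}$ and $\ome^{pq}\wedge\ov\ome^{rs}$; the normalization constant $\gamma^2 = i/2\pi$ is already absorbed into the $\Om_{ij}$ and $\Om^{pq}$, so no irrational scalar is introduced. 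Any $\SO(2n)$-invariant top form is a scalar multiple of $\Omega = \bigwedge_{i<j}\Om_{ij}\wedge \bigwedge_{p<q}\Om^{pq}$, and $\int_\X \Omega = \prod_{k=1}^{n-1}\tfrac{2}{(2k)!}\in \Q$, so the integral is rational. This completes the proof.

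The main obstacle is the bookkeeping in the third step: one must be sure that when $\eta$ is expanded in the invariant basis $\{\ome_{ij},\ov\ome_{ij},\ome^{pq},\ov\ome^{pq}\}$ no irrational constants sneak in via the factors of $\gamma$, and that the non-invariant summands of $\eta$ do not contribute (equivalently, one uses that $\pi$-averaging preserves the integral and sends $\eta$ to a rational multiple of $\Omega$). Both facts are built into the framework of \S \ref{hvb} and \S \ref{compaa}, so no further calculation beyond the algorithm is needed.
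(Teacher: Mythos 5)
Your proposal is correct and follows essentially the same route as the paper: degree count forces the monomial into $J_n$, the algorithm of \S\ref{compaa} expresses $\wh{x}_1^{k_1}\cdots\wh{x}_n^{k_n}$ as a rational multiple of the invariant form $\Omega$, and the evaluation $\int_\X\Omega=\prod_{k=1}^{n-1}\frac{2}{(2k)!}$ together with the factor $\frac12$ in $\wh{\deg}\circ a$ gives rationality. Your extra bookkeeping about the $\gamma$-normalization and invariant averaging is a more explicit rendering of what the paper leaves implicit, not a different argument.
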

\begin{proof}
Since $\sum k_i= \dim{\X}=n^2-n+1$, the monomial 
$\x_1^{k_1}\cdots \x_n^{k_n}$ lies in the ideal $J_n$.
We therefore obtain 
\[
\wh{x}_1^{k_1}\wh{x}_2^{k_2}\cdots\wh{x}_n^{k_n}=
r\,\Omega
\]
for some $r\in \Q$, where $\Om$ is the top invariant form of \S
\ref{hvb}. Using the computation at the end of \S \ref{hvb}, it
follows that
\[
\wh{\deg}(\wh{x}_1^{k_1}\wh{x}_2^{k_2}\cdots\wh{x}_n^{k_n})=
\frac{r}{2}\prod_{k=1}^{n-1}\frac{2}{(2k)!}. 
\]
\end{proof}

The flag variety $\X$ has a natural pluri-Pl\"{u}cker embedding $j$ in 
projective space. The morphism $j$ is defined as the composite of 
the natural inclusion of $\X$ into the variety parametrizing
all partial flags
\[
0=E_0\subset E_1\subset \cdots \subset E_n \subset E_{2n}=E
\]
with $\dim(E_i)=i$ for each $i$, followed by the pluri-Pl\"{u}cker
embedding of the latter type A flag variety into projective space. Let
$\ov{\O}(1)$ denote the canonical line bundle over projective space,
equipped with its canonical metric (so that $c_1(\ov{\O}(1))$ is the
Fubini-Study form). Following \cite{GS1, Fa, BoGS}, the 
{\em projective height} of $\X$ relative to
$\ov{\O}(1)$ is given by
\[
h_{\ov{\O}(1)}(\X)
=\wh{\deg}\left(\wh{c}_1(\ov{\O}(1))^{n^2-n+1}\vert \ \X \right).
\]
Using Theorem \ref{mainthm} and arguing as in \cite[\S 4.6]{T5}, we
conclude that the projective height $h_{\ov{\O}(1)}(\SO_{2n}/B)$ is a
rational number. The height formula of Kaiser and K\"ohler \cite{KK}
provides a different proof of this fact. Relating these two approaches
to computing the height to each other seems rather difficult; some
first steps in this direction are taken in \cite{T3, T4}.

\subsection{Arithmetic Schubert calculus}
\label{asc}

For any partition $\l\in\G_n$ and $\om\in S_n$, define 
\[
\wh{\DS}_{\l,\om} = \DS_{\l,\om}(\wh{x}_1,\ldots,\wh{x}_n). 
\]

If $\l\in\G_n\ssm\F_{n-1}$, let $r_\l$ be the largest repeated part of
$\l$, and let $\ov{\l}$ be the partition obtained from $\l$ by
deleting two (respectively, one) of the parts $r_{\l}$ if 
$r_\l <n$ (respectively, if $r_\l=n$).
If $r_{\l}<n$, then properties (b), (c) in \S \ref{definitions},
(\ref{modstr}), and (\ref{simple}) imply that 
\[
\wh{\DS}_{\l,\om} = \wh{\DS}_{\ov{\l},\om}
\wt{P}_{r_\l,r_\l}(\wh{x}^2_1,\ldots,\wh{x}^2_n) 
= \frac{(-1)^{r_\l}}{4}\DS_{\ov{\l},\om}(x_1,\ldots,x_n)\wedge
\wt{c}_{2r_\l}(\ov{\E}, \ov{\E}^*).
\]
If $r_{\l}=n$, then property (f) in \S \ref{definitions},
(\ref{modstr}), and (\ref{key3}) give
\[
\wh{\DS}_{\l,\om} = \wh{\DS}_{\ov{\l},\om}
\wt{P}_n(\wh{x}_1,\ldots,\wh{x}_n) 
= \frac{1}{2}\DS_{\ov{\l},\om}(x_1,\ldots,x_n)\wedge
(\frac{1}{2}\H_{n-1}c_{n-1}(\ov{E}^*_n) + 
\wt{c}_n(\ov{\E}^*)).
\]
Since $\wh{\DS}_{\l,\om} \in a(\wt{\A}(\X_\R))$ whenever 
$\l\in \G_n\ssm\F_{n-1}$,
we will denote these classes by $\wt{\DS}_{\l,\om}$.  
The next theorem uses the basis 
of orthogonal Schubert polynomials to compute 
arbitrary arithmetic intersections in
$\wh{\CH}(\X)$ with respect to the splitting (\ref{bigiso}) induced 
by (\ref{splittingmap}).

\begin{thm}
\label{SOring}
Any element of the arithmetic Chow ring $\wh{CH}(\X)$ can be
expressed uniquely in the form $\dis \sum_{w\in
\wt{W}_n}a_w\wh{\DS}_w+\eta$, where $a_w\in A$ and
$\eta\in\wt{\A}(\X_{\R})$. For $u,v\in \wt{W}_n$ we have
\begin{equation}
\label{maineq}
\wh{\DS}_u\cdot\wh{\DS}_v=\sum_{w\in \wt{W}_n}d_{uv}^w\,\wh{\DS}_w+
\sum_{{\l\in \G_n\ssm\F_{n-1}}\atop{\om\in
S_n}}d_{uv}^{\l\om}\,\wt{\DS}_{\l,\om},
\end{equation}
\[
\wh{\DS}_u\cdot \eta=\DS_u(x_1,\ldots,x_n)\wedge\eta,
\ \ \ \ \text{and}
\ \ \ \ \eta\cdot \eta'=(dd^c\eta)\wedge\eta',
\]
where $\eta$, $\eta'\in\wt{\A}(\X_{\R})$ and the integers $d_{uv}^w$,
$d_{uv}^{\l\om}$ are as in {\em (\ref{structeq})}.
\end{thm}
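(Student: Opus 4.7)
The plan is to derive all three assertions from the splitting of the exact sequence (\ref{seseq}) together with the polynomial identity (\ref{structeq}) and the ideal structure (\ref{modstr}).

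First I would prove the uniqueness of the expression. The splitting map (\ref{splittingmap}) gives, via (\ref{bigiso}), a direct sum decomposition $\wh{\CH}(\X) = \epsilon(\CH(\X)) \oplus a(\wt{\A}(\X_\R))$, and by definition $\epsilon(\DS_w(\rX_n)) = \wh{\DS}_w$. Since the $\DS_w$ with $w \in \wt{W}_n$ form an $A$-basis of $\CH(\X)$ by Theorem \ref{defthm} and property (a) of \S \ref{sproperties}, their images $\wh{\DS}_w$ form an $A$-basis of $\epsilon(\CH(\X))$, which immediately yields the unique expression $\sum_w a_w \wh{\DS}_w + \eta$ for every class in $\wh{\CH}(\X)$.

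Next, I would obtain (\ref{maineq}) by substituting $\wh{x}_i$ for $\x_i$ in the polynomial identity (\ref{structeq}). The left hand side becomes $\wh{\DS}_u \cdot \wh{\DS}_v$, and the first sum on the right becomes $\sum_w d_{uv}^w \wh{\DS}_w$ by the definition of $\wh{\DS}_w$. For the second sum, the key input is the identification $\DS_{\l,\om}(\wh{x}_1,\ldots,\wh{x}_n) = \wt{\DS}_{\l,\om}$ already established in the discussion preceding the theorem: for $\l \in \G_n \ssm \F_{n-1}$, the classes $\wh{\DS}_{\l,\om}$ lie in $a(\wt{\A}(\X_\R))$, as computed using properties (b), (c), (f) of \S \ref{definitions} combined with (\ref{simple}) when $r_\l < n$ or (\ref{key3}) when $r_\l = n$.

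For the remaining two identities I would use (\ref{modstr}) together with the definition $\wh{x}_i = -\wh{c}_1(\ov{L}_i)$. Applying $\wh{c}_k(\ov{F}) \cdot \eta = c_k(\ov{F}) \wedge \eta$ to $\ov{F} = \ov{L}_i$ gives $\wh{x}_i \cdot \eta = x_i \wedge \eta$; since $\wt{\A}(\X_\R)$ is an ideal in $\wh{\CH}(\X)$, iterating this observation produces $m(\wh{x}_1,\ldots,\wh{x}_n) \cdot \eta = m(x_1,\ldots,x_n) \wedge \eta$ for every monomial $m$, and $A$-linearity gives $\wh{\DS}_u \cdot \eta = \DS_u(x_1,\ldots,x_n) \wedge \eta$. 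The final equation $\eta \cdot \eta' = (dd^c \eta) \wedge \eta'$ is simply the second half of (\ref{modstr}). The main obstacle is already resolved before the statement, namely the identification of $\DS_{\l,\om}(\wh{x}_1,\ldots,\wh{x}_n)$ for $\l \in \G_n \ssm \F_{n-1}$ as an element of $a(\wt{\A}(\X_\R))$; granted this, the three claims follow formally from the splitting (\ref{bigiso}), the polynomial identity (\ref{structeq}), and the ideal structure (\ref{modstr}).
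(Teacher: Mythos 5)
Your proposal is correct and follows essentially the same route as the paper's proof: uniqueness from the splitting (\ref{bigiso}), equation (\ref{maineq}) by substituting $\wh{x}_i$ into the formal identity (\ref{structeq}) together with the identification of $\wh{\DS}_{\l,\om}$ with $\wt{\DS}_{\l,\om}$ established just before the theorem, and the last two identities from (\ref{modstr}). You simply spell out the steps the paper leaves implicit.
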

\begin{proof} 
The first statement is a consequence of the splitting (\ref{bigiso}).
Equation (\ref{maineq}) is a consequence of the formal identity
(\ref{structeq}) and our definitions of $\wh{\DS}_w$ and
$\wt{\DS}_{\l,\om}$. The remaining assertions follow from the
structure equations (\ref{modstr}).
\end{proof}

\medskip
We remark that one can refine Theorem \ref{SOring} by replacing
$\wh{\CH}(\X)$ with the invariant arithmetic Chow ring
$\wh{\CH}_{\inv}(\X)$. Following \cite[\S 4.5]{T5}, the ring
$\wh{\CH}_{\inv}(\X)$ is obtained by substituting the space $\A(\X_\R)$
with a certain subspace of the space of all $\SO(2n)$-invariant
differential forms on $\X(\C)$. We leave the details to the reader.

\end{document}